\documentclass[10pt,reqno]{amsart}
\usepackage{amssymb,amsmath,bm,geometry,graphics,url,color}
\usepackage{amsfonts}
\usepackage{graphicx}
\usepackage{epsfig,multirow}
\usepackage{amscd}
\usepackage{enumerate}
\usepackage{mathrsfs}
\usepackage{xypic}
\usepackage{stmaryrd}
\usepackage{fancybox}
\usepackage{exscale,array}
\usepackage{enumitem}%
\usepackage{color,cases}
\usepackage{hyperref}
\usepackage{algorithm}
\usepackage{algorithmic}
\usepackage{verbatim}
\usepackage{amsmath}
\usepackage{supertabular}
\usepackage{array}
\usepackage{multirow}
\usepackage{booktabs}
\usepackage{threeparttable}
\def\pdt2{\partial_t^2}
\def\pdx2{\partial_x^2}

\def\eps{\varepsilon}

\newcommand{\bv}{{\bf v}}

\newcommand{\bw}{{\bf w}}

\newcommand{\HH}{{\bf H}}
\newcommand{\EE}{{\bf E}}
\newcommand{\FF}{{\bf F}}
\newcommand{\CU}{{\bf curl}}

\newcommand{\ba}{\mathbf{a}}
\newcommand{\bx}{\mathbf{x}}

\newcommand{\br}{{\mathbf r}}

\newcommand{\bF}{\mathbf{F}}
\newcommand{\bJ}{\mathbf{J}}

\newtheorem{theo}{Theorem}[section]
\newtheorem{lem}[theo]{Lemma}
\newtheorem{cor}[theo]{Corollary}
\newtheorem{rem}[theo]{Remark}

\newtheorem{prop}[theo]{Proposition}

\numberwithin{equation}{section}

\title[Closed form solution of Maxwell's equations]{An attractive analytic solution of the Maxwell's equation}
\author[X.R. Zou]{Xiaorong Zou}\address{\hspace*{-12pt}X.R.~Zou: Global Market Risk Analytic, Bank of America, NYC, NY, USA}
\email{xiaorong.zou@bofa.com}
\subjclass[2020]{35Q61}
\keywords{Maxwell's Equation,  Ohm's Law.}

\begin{document}
\maketitle
\dedicatory{}
\begin{abstract}
	In this paper,  we aim to develop a close-form solution of the standard Maxwell's equations that  describe the propagation of electromagnetic waves in an isotropic homogeneous medium such as a vacuum. The proposed solution has a simple analytic structure that explains how initial conditions characterize the underlying electromagnetic wave.  As such, it can be used as a tool to generate suitable electromagnetic waves in practice. The clean structure of the solution can also help us to derive  closed-form solutions in a general medium with non-zero current term.  In this paper, we enhance the solution to cover a special case where the current comes from the contribution of an independent generator.  We shall leverage the tools developed in this paper to derive similar closed-form solutions of Maxwell's equations in general mediums with non-zero currents in a separate paper. 
\end{abstract}
 \section{Introduction}
The Maxwell's equations are the foundation of classical electromagnetic waves and play a critical role in a wide variety of applications in science and engineering.   In this paper,  we aim to develop a close-form solution of the standard Maxwell's equations that characterize the propagation of electromagnetic waves in an isotropic homogeneous medium such as a vacuum.
 \begin{subequations}\label{maxsys}
\begin{numcases}%
\,\frac{\partial \HH}{\partial t} =-\frac{1}{\mu}\CU\  \EE,\qquad \frac{\partial \EE}{\partial t} =\frac{1}{\eps}\CU\  \HH , \label{maxsys a}\\
\HH(\bx,0) = H_0(\bx) ,\qquad  \EE(\bx,0) =E_0(\bx), \label{IV}
\end{numcases}
\end{subequations}
where $\bx = (x_1,x_2,x_3)\in R^3$, $\textbf{H}= (H_1 ,H_2 ,H_3 )^{\intercal}: \mathbb{R}^3\times
\mathbb{R}_+ \rightarrow \mathbb{R}^3$ {stands for}
magnetic field intensity,
$\textbf{E}= (E_1 ,E_2 ,E_3 )^{\intercal}: \mathbb{R}^3\times
\mathbb{R}_+ \rightarrow \mathbb{R}^3$ represents { electric field intensity}, $H_0(\bx), E_0(\bx)$ represents the initial condition of the system,  $\mu$ and $\eps$ are constant in this paper and characterize the magnetic permeability and electric permittivity, respectively.  It is well known that Maxwell's equations \eqref{maxsys} have unique smooth solutions for all time if the initial data  \eqref{IV} are suitably smooth \cite{Leis86}.

 There are rich researches on numerical solutions of Maxwell's equations over the last couple of decades due to the importance and a wide range of applications. The standard finite-difference time domain (FDTD) method  was initially proposed in { Yee  in \cite{3}}, and further enhanced in \cite{Monk94,n38,n49,n41,n43,n44}. In general, FDTD-based methods require small temporal step-size to ensure their stability.  The alternating direction implicit (ADI) technique was proposed to improve the efficiency in {\cite{3,4,5,6,n25,n22}}.  Time dimension $t$ and space dimension $x,y,z$ are often treated separately in the literature.  In space dimension, in addition to the standard finite difference method,  discontinuous Galerkin (dG) \cite{n33,Moya12,Descombes13,Descombes16} and Fourier pseudo-spectral \cite{n29} have been developed.  The spatially {discretised} Maxwell's equations are often transformed to integration in time dimension that can be solved by various schemes such as  the explicit method \cite{Grote10},  Verlet method \cite{Fahs09},
Runge-Kutta (RK) methods { \cite{Hochbruck151,Hochbruck16}}, splitting methods \cite{Verwer11, Hochbruck15},  low-storage RK schemes \cite{Diehl10}, multiscale methods \cite{Henning16, Hochbruck19}, and exponential RK {methods \cite{t13,yang}}. The solutions of Maxwell's equations \eqref{maxsys} exhibit quite a few  physical invariants: energy
conservation laws, symplectic  conservation laws, helicity
conservation laws, momentum conservation laws and divergence-free
fields. These invariants are very important in the long time
propagation of the electromagnetic waves \cite{n44}.  In \cite{WJ}, an efficient scheme is proposed to solve the Maxwell’s equations. It achieves decent accuracy and keeps the desired conservation laws in discretised states. 

To author's best knowledge,  there are little researches in literature that consider exact analytic solutions of Maxwell’s equations. In this paper,  we aim to develop an analytic solution of Equation \eqref{maxsys}.  First, we apply Rieman--Silberstein representation of the electromagnetic fields and transform Maxwell Equation \eqref{maxsys} to an abstract Cauchy problem (ACP) \cite{engel_nagel}.  We then study the properties of eigenvalues and eigenvectors of the underlying linear operator of ACP, and use them to derive desired close-formed solution.

The proposed solution has a simple analytic structure that explains how initial conditions characterize the underlying electromagnetic wave. For example, one can directly verify the conservation of energy (Proposition \ref{prop:property_maxwell_1}) and divergence (Corollary \ref{cor:conservation-divergence}).  We also use it to characterize a perpendicular electromagnetic field (see Corollary \ref{cor:2}), which maximizes the power flow (Poynting vector) of the electromagnetic field with given total energy. We expect that the explicit formulation of the solution in initial conditions can be used to generate suitable electromagnetic waves in practice.  

The clean structure of the solution's in an isotropic homogeneous medium  can help us to derive  closed-form solutions under more general setting by adding current density $\bJ$ such that Equation (\ref{maxsys a}) is replaced by 
\begin{equation}\label{assumption:jg}
	\frac{\partial \HH}{\partial t} =-\frac{1}{\mu}\CU\  \EE, \qquad \frac{\partial \EE}{\partial t} =\frac{1}{\eps}\CU\  \HH - \bJ .
\end{equation}
In this paper, we consider a special case $\bJ = J^g(\bx)$, where $J^g$ is independent on time $t$ and stands for the contribution of a stationary generator and can be considered as imposed, independently of the electromagnetic field \cite{bossavit} (page 10-11). It turns out that the closed form solution of Equation (\ref{assumption:jg}) can be derived by the solution of Equation \eqref{maxsys a} with some adjustment (Theorem \ref{theorem-main-3}). The solutions of Maxwell's equation under a general setting on $\bJ$ will be studied in a separate paper \cite{zou_maxwell}.

The rest of the paper is organized as follows. In section \ref{sec:bj=0}, we first convert Maxwell Equation \eqref{maxsys} into an ACP and study the properties of eigenvalues and eigenvectors of underlying linear operator, and then use the results to derive desired close-formed solution.  Section \ref{sec:jg} is used to derive the analytic solution of Maxwell Equation \ref{assumption:jg}. In
Section \ref{sec:example},  we demonstrate how to solve Equation \eqref{maxsys} with two specific initial conditions by using the closed-form formula developed in this paper.  Some computational details are shown in Appendix \ref{app:property_maxwell_1}.

\section{The development of analytic solution of Maxwell's equation }\label{sec:bj=0}
In this section, we develop the desired closed-form solutions of Maxwell Equation \eqref{maxsys}. We rewrite two curl-equations in Equation \eqref{maxsys} as an abstract Cauchy problem (ACP).
\begin{equation}\label{eq:acp}
\begin{pmatrix}
\frac{\partial{\sqrt{\eps}  \EE}}{\partial t}    \\
\frac{\partial {\sqrt{\mu}\HH}}{\partial t}
\end{pmatrix}
=\mathcal{C} \begin{pmatrix}
	\sqrt{\eps} \EE \\
\sqrt{\mu} \HH 
\end{pmatrix},   \quad
\begin{pmatrix}
	 \sqrt{\eps}  \EE(\bx,0) \\
	  \sqrt{\mu}\HH(\bx,0)  
\end{pmatrix}
= \begin{pmatrix}
	\sqrt{\mu} H_0(\bx)  \\
	\sqrt{\eps} E_0(\bx)
\end{pmatrix},
\end{equation}
where
\[
\mathcal{C}= \left(
\begin{array}{cc}
	\mathbf{0} &  \frac{\CU}{\sqrt{\mu\eps}}\\
	-\frac{\CU}{\sqrt{\mu\eps}} & \mathbf{0} \\
\end{array}
\right)=:
\left(
\begin{array}{cc}
	\mathbf{0} &  A\\
	-A & \mathbf{0} \\
\end{array}
\right).
\]
Follow Riemann-Silberstein representation of the electromagnetic fields \cite{jestada}, let $\FF = \sqrt{\epsilon}\EE + \sqrt{\mu}\HH \cdot i$, and we have 
\[
\frac{\partial \bF}{\partial t} = \frac{1}{   \sqrt{\mu\epsilon}} \CU (\sqrt{\mu}\HH) -  \frac{1}{\sqrt{\mu\epsilon}} \CU (\sqrt{\epsilon}\EE)i = -i \frac1 {\sqrt{\mu\epsilon}}  \CU (\sqrt{\epsilon} \EE +  \sqrt{\mu}\HH i) =  -i A \FF.
\]
ACP (\ref{eq:acp}) is reduced to 
\begin{equation}\label{maxwell_complex}
\frac{\partial \FF}{\partial t} = -i  A \FF,  \qquad  \FF (\bx,0) = \sqrt{\epsilon}\EE_0  +  \sqrt{\mu}\HH_0 \cdot i .
\end{equation}
In general, one can solve the ACP \cite{engel_nagel}
\begin{equation}\label{eq-acp}
	F(\bx, t) = e^{ -i  A t}F(\bx,0) = \sum_{n=0} \frac{(-itA)^{(n)}}{n!}F(\bx,0) .
\end{equation}
Throughout the paper, $\bw =(w_1,w_2, w_3)^T\in R^3$ denote a fixed vector, also called wave vector in literature,   $V^{\bw}=e^{i\bx \cdot \bw} C^3$ be the $3$-dim complex vector space generated by $e^{i\bx \cdot \bw}$. Define 
\begin{eqnarray*}
s_{\bw} &:=& w_1+w_2+w_3, \\
{\br}_{\bw} &=& (w_2-w_3, w_3-w_1, w_1-w_2)^T  \\
\gamma_{\bw} &:=& |{\br}_{\bw}|  = \sqrt{3|\bw|^2 - s_{\bw}^2}, \quad  \nu_{\bw} = \sqrt 2\frac{\gamma_{\bw} }{|\bw|}.
\end{eqnarray*}
The following observation is the key to establish a closed-form solution of ACP (\ref{maxwell_complex}).
%For a more explicit analytic solution, we study the properties of the spectrum of the generator $iA$. 
\begin{lem} The restriction of $\CU$ on $V^{\bw}$ has three eigenvalues  $\{\mu_{d,\bw}\}_{d=1}^3 $ with
\begin{equation}\label{eq:eigenvalue}
\mu_{1,\bw} = 0, \quad \mu_{2,\bw} = -|\bw|, \quad \mu_{3,\bw} = -\mu_{2,\bw},
\end{equation}
where 
\[
|\bw| = \sqrt{w_1^2 + w_2^2 +w_3^2}.
\]
\end{lem}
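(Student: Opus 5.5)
The plan is to reduce the statement to a finite-dimensional linear algebra problem. For $v\in C^3$, write $f=e^{i\bx\cdot\bw}v$. By the product rule, $\partial_{x_j}(e^{i\bx\cdot\bw}v_k)=i w_j e^{i\bx\cdot\bw}v_k$, so
\[
\CU\, f = e^{i\bx\cdot\bw}\, i\,\bw\times v = e^{i\bx\cdot\bw} M_{\bw} v, \qquad
M_{\bw}= i\begin{pmatrix} 0 & -w_3 & w_2\\ w_3 & 0 & -w_1\\ -w_2 & w_1 & 0\end{pmatrix}.
\]
Hence $\CU$ maps $V^{\bw}$ into itself. Under the identification $v\leftrightarrow e^{i\bx\cdot\bw}v$, its restriction is represented by the $3\times 3$ matrix $M_{\bw}$, so the eigenvalues of $\CU|_{V^{\bw}}$ are exactly the eigenvalues of $M_{\bw}$.

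Next, compute the characteristic polynomial of $M_{\bw}=iK$, where $K$ is the real skew-symmetric cross-product matrix of $\bw$. For a skew-symmetric $3\times 3$ matrix, $\det(\lambda I-K)=\lambda^3+|\bw|^2\lambda$, so the eigenvalues of $K$ are $0$ and $\pm i|\bw|$. Multiplying by $i$ gives the eigenvalues $0$, $-|\bw|$ and $|\bw|$ of $M_{\bw}$, which is \eqref{eq:eigenvalue}.

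As a sanity check, and because the paper will need eigenvectors later, one can exhibit them directly. The vector $v_1=\bw$ gives $\bw\times\bw=0$, hence eigenvalue $0$. For the other two, take a unit vector $\mathbf{a}\perp\bw$ and set $\mathbf{b}=\bw\times\mathbf{a}/|\bw|$; then $v=\mathbf{a}\pm i\mathbf{b}$ satisfies $i\bw\times v=\mp|\bw|\,v$. A natural choice of $\mathbf{a}$ is $\br_{\bw}/\gamma_{\bw}$, since $\br_{\bw}\cdot\bw=0$, which connects to the quantities the paper has just defined.

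The only delicate point is the degenerate case $\bw=0$, where all three eigenvalues coincide, and the case $\gamma_{\bw}=0$ (that is, $\bw$ parallel to $(1,1,1)$), where $\br_{\bw}$ vanishes. The eigenvalue statement itself is unaffected by this second case, because the characteristic polynomial argument does not use $\br_{\bw}$. So the obstacle only concerns the explicit eigenvector choice, and in that case I would simply pick a different perpendicular vector $\mathbf{a}$.
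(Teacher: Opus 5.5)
Your proof is correct and is essentially the paper's: both write $\CU(\xi e^{i\bx\cdot\bw}) = i\Phi_{\bw}\xi\, e^{i\bx\cdot\bw}$ and obtain the eigenvalues $0,\pm|\bw|$ by multiplying the eigenvalues $0,\pm i|\bw|$ of the skew-symmetric $\Phi_{\bw}$ by $i$; your characteristic polynomial $\lambda^3+|\bw|^2\lambda$ justifies a step the paper only asserts. One slip in your optional eigenvector aside: with $\mathbf{b}=\bw\times\mathbf{a}/|\bw|$ one gets $i\bw\times(\mathbf{a}\pm i\mathbf{b}) = \pm|\bw|(\mathbf{a}\pm i\mathbf{b})$, not $\mp$ (check $\bw=(0,0,1)^T$, $\mathbf{a}=(1,0,0)^T$), although the set of eigenvalues is unchanged.
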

\begin{proof} 
Define the anti-symmetric matrix $\Phi_{\bw}$ by
%\begin{equation}\label{eq:phi}
\[
\Phi_{\bw}=\left(
\begin{array}{ccc}
	0 & -w_3 &w_2\\
	w_3 & 0 & -w_1 \\
	-w_2 & w_1 & 0
\end{array}
\right).
\]
%\end{equation}
Let $\eta =(\xi_1,\xi_2,\xi_3)^Te^{i\bx \cdot \bw}=\xi e^{i\bx \cdot \bw}$. By definition of $\CU$, we have
\begin{equation}\label{eq:curl}
	\CU (\eta) = i  \Phi_{\bw}\xi e^{i\bx \cdot \bw}.
\end{equation}
If $\eta$ is a eigenvector of the operator $\CU$ associated to certain eigenvalue $i\lambda$, i.e.
\[
i\lambda \xi e^{i\bx \cdot \bw} = \CU (\eta) =  i  \Phi_{\bw}\xi e^{i\bx \cdot \bw},
\]
or equivalently,
\begin{equation}\label{eq:Phi_w}
 \CU (\xi e^{i\bx \cdot \bw})= i\lambda \xi e^{i\bx \cdot \bw}  \quad \mbox{if and only if} \quad	\Phi_{\bw}\xi = \lambda \xi,
\end{equation}
which implies $\lambda$ is an eigenvalue of the skew-symmetric $\Phi_{\bw}$ whose eigenvalues are $0,i|\bw|, -i|\bw|$, implies $\mu_d$ is given by Equation (\ref{eq:eigenvalue}) for $d=1,2,3$.
%If $\{v_d\}^3_{d=1}$ are a set of eigenvalues associated to $\{i\lambda_{d,\bw}\}^3_{d=1}$. It is clear that $\{v_d e^{i\bx \cdot \bw}\}_{d=1}^3$ forms a base of $V^w$
\end{proof}

\begin{lem}\label{lem-eigenvalues}  Let $|\bw|>0$. 
	\begin{enumerate}
	\item Assume $|{\br}_{\bw}|>0$.  Define
	%\begin{equation} \label{eq:v_1}
	\[
	\bv_{1,\bw} = \bw/|\bw|
	\]
	%\end{equation}
	and $\{{\bv}_{d,\bw}\}_{d=2}^3$ by
\begin{equation}\label{v_d_w_1}
{\bv}_{d,\bw} = \frac{1}{\nu_{\bw}}(\frac{s_{\bw}}{|\bw|^2} w-(1,1,1)^T - i\frac{\mu_{d,\bw}}{|\bw|^2} {\br}_{\bw} ).   
\end{equation}
	Let 
	%\begin{equation}\label{eq:V_w}
	\[
	V_{\bw}:=(\bv_{1,\bw}, \bv_{2,\bw},\bv_{3,\bw}), \quad V_{\bw}^{\dagger}:= \bar{V}_{\bw}^T.
	\]
		%\end{equation}
 Then   
	\begin{eqnarray}
		V_{\bw} \cdot V_{\bw}^{\dagger} &=& I_3, \label{eq:normality_general}\\	
		\CU (\bv_{d,\bw} e^{i\bx \cdot \bw}) &=&\mu_{d,\bw} \bv_{d, \bw} e^{i\bx \cdot \bw}, \quad d=1,2,3,  \label{eq:eigen_vector_general}
	\end{eqnarray}
where $\{ \mu_{d,\bw}\}_1^3$ is defined by Equation (\ref{eq:eigenvalue}).
	So $\{{\bv}_{d,\bw}e^{i\bx \cdot \bw}\}^3_{d=1}$ are three  eigenvectors of $\CU$ operator associated to $\mu_{d,\bw}$ and $\{{\bv}_{d,\bw}\}^3_{d=1}$ forms a orthogonal base of $C^3$. 
	\item Assume $|{\br}_{\bw}|=0$. In this case, $w_1=w_2=w_3:= \omega$.  Let 	\begin{equation}\label{v_d_w_2}
		v_1 = \frac{1}{\sqrt 3}(1,1,1)^T,\quad {v}_{2} := \frac{1}{\sqrt{3}}(\frac{1+i\sqrt 3}2, \frac{1-i\sqrt 3}{2}, -1)^T, \quad v_3 = \bar{v}_2.
	\end{equation}  
	%Then $\{{\bv}_{d}e^{i\bx \cdot \bw}\}^3_{d=1}$ are three eigenvectors associated the eigenvalues $\CU$ on 
	Define
\[
V = (v_1,v_2,v_3), \quad (\eta_{1,\omega},\eta_{2,\omega},\eta_{3,\omega}) = (0, -\omega \sqrt 3, \omega \sqrt 3).
\]
Then
	\begin{eqnarray}
	V\cdot V^{\dagger} &=& I_3 \label{eq:normality_special} \\
	 \CU (v_{d} e^{i\bx \cdot \bw}) &=& \eta_{d,\omega}v_{d} e^{i\bx \cdot \bw} , \quad d=1,2,3 \label{eq:eigen_vector_special}
	\end{eqnarray}
\end{enumerate}
\end{lem}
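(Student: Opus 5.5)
The plan is to reduce both parts to finite-dimensional linear algebra by means of identity \eqref{eq:curl}: $\CU(\xi e^{i\bx\cdot\bw}) = i\Phi_{\bw}\xi\, e^{i\bx\cdot\bw}$, where $\Phi_{\bw}\xi = \bw\times\xi$. Hence $\CU(\xi e^{i\bx\cdot\bw}) = \mu\,\xi e^{i\bx\cdot\bw}$ holds if and only if $i\Phi_{\bw}\xi = \mu\xi$. Since $\Phi_{\bw}$ is real and skew-symmetric, the matrix $i\Phi_{\bw}$ is Hermitian. Consequently, eigenvectors for distinct eigenvalues are automatically orthogonal in $C^3$. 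Once each column is shown to be a unit eigenvector, $V_{\bw}$ is unitary, which gives $V_{\bw}^{\dagger}V_{\bw}=I_3$ and therefore $V_{\bw}V_{\bw}^{\dagger}=I_3$, i.e. \eqref{eq:normality_general}.

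For part (1), write $\mathbf{1}=(1,1,1)^T$ and $\ba = \frac{s_{\bw}}{|\bw|^2}\bw - \mathbf{1}$, so that $\bv_{d,\bw} = \nu_{\bw}^{-1}\bigl(\ba - i\mu_{d,\bw}|\bw|^{-2}\br_{\bw}\bigr)$. I will establish three vector identities.
\begin{itemize}
\item[(i)] $\br_{\bw} = \bw\times\mathbf{1}$; hence $\br_{\bw}\perp\bw$ and $\br_{\bw}\perp\mathbf{1}$.
\item[(ii)] $\bw\times\ba = -\bw\times\mathbf{1} = -\br_{\bw}$.
\item[(iii)] By the BAC--CAB rule, $\bw\times\br_{\bw} = (\bw\cdot\mathbf{1})\bw - |\bw|^2\mathbf{1} = |\bw|^2\ba$.
\end{itemize}
For $d=1$, the eigenvector property is immediate since $\bw\times\bw=0$. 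For $d=2,3$, I need $\Phi_{\bw}\xi = -i\mu_{d,\bw}\xi$ with $\xi = \ba - i\mu_{d,\bw}|\bw|^{-2}\br_{\bw}$. Identities (ii) and (iii) give the left side as $-\br_{\bw} - i\mu_{d,\bw}\ba$. The right side is $-i\mu_{d,\bw}\ba - \mu_{d,\bw}^2|\bw|^{-2}\br_{\bw}$, and these agree because $\mu_{d,\bw}^2 = |\bw|^2$. This proves \eqref{eq:eigen_vector_general}.

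For normalization, note that $\ba\cdot\bw = s_{\bw} - s_{\bw} = 0$ and $\ba\perp\br_{\bw}$, with both vectors real. The squared norm of $\bv_{d,\bw}$ is therefore $\nu_{\bw}^{-2}\bigl(|\ba|^2 + |\br_{\bw}|^2/|\bw|^2\bigr)$. Expanding, $|\ba|^2 = 3 - s_{\bw}^2/|\bw|^2 = \gamma_{\bw}^2/|\bw|^2$, so the squared norm is $2\gamma_{\bw}^2/(|\bw|^2\nu_{\bw}^2) = 1$. The vector $\bv_{1,\bw}$ is trivially a unit vector. The hypothesis $\gamma_{\bw}>0$ is exactly what makes $\nu_{\bw}\neq 0$. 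The three eigenvalues $0,\mp|\bw|$ are distinct because $|\bw|>0$, so orthogonality follows from the Hermitian argument above.

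For part (2), with $\bw=\omega\mathbf{1}$, the vectors $\ba$ and $\br_{\bw}$ vanish, so the general formula degenerates and I will verify the claim directly. First, $v_1$ is parallel to $\bw$, so it lies in the kernel. Next, $v_2$ is (a multiple of) the vector $(\zeta,\bar\zeta,-1)$ with $\zeta = e^{i\pi/3}$, and its components sum to $\zeta+\bar\zeta-1=0$, so $v_2\perp\bw$. I will compute $i\,\omega\,\mathbf{1}\times v_2$ componentwise using $\zeta^2=\zeta-1$. Then $v_3=\bar v_2$ follows by conjugation, since $\Phi_{\bw}$ is real: $i\Phi_{\bw}\bar v = -\overline{i\Phi_{\bw}v}$. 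Unit length of $v_1,v_2$ and orthogonality $v_2^{T}v_2=0$ (equivalently $v_2\perp \bar v_2$) are direct checks, which give \eqref{eq:normality_special}.

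The only real obstacle I anticipate is sign and labeling bookkeeping. This includes confirming that the stated $\eta_{2,\omega}=-\sqrt3\,\omega$ matches $v_2$ rather than $v_3$, and that the sign convention for $\mu_{d,\bw}$ in \eqref{v_d_w_1} is consistent with the $i$ in \eqref{eq:curl}. I will settle this by the explicit componentwise computation rather than by abstract reasoning.
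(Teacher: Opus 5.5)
Your proposal is correct and follows essentially the paper's route: you use \eqref{eq:curl} to reduce to the action of $\bw\times\cdot$ on $\bw$, $\mathbf{1}$ and $\br_{\bw}$ and then check the eigen-relations directly, and your BAC--CAB identity (iii) is exactly the paper's componentwise formula for $\CU(\br_{\bw}e^{i\bx\cdot\bw})$. Your norm computation and Hermitian-orthogonality argument fill in the normalization that the paper leaves as ``direct verification,'' and the componentwise check you deferred in part (2) does work out, giving $\CU(v_2e^{i\bx\cdot\bw})=-\sqrt3\,\omega\, v_2e^{i\bx\cdot\bw}$ and hence $\eta_{2,\omega}=-\sqrt3\,\omega$ as stated.
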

\begin{proof} 
We first assume that $|{\br}_{\bw}|>0$.  One can directly verify Equation (\ref{eq:normality_general}). By Equation (\ref{eq:curl}), we have $\CU (\bw e^{i\bx \cdot \bw}) = 0$, which implies Equation (\ref{eq:eigen_vector_general}) for $d=1$.  
To verify the other two eigenvectors,  first assume that $|{\br}_{\bw}|>0$,  which implies  $v_{2,\bw}\neq v_{3,\bw}$. By Equation (\ref{eq:curl}) again, let $\lambda_{d} = -i\mu_{d,\bw}$,  we have
\[
\CU ((1,1,1)^T e^{i\bx \cdot \bw}) = i {\br}_{\bw} e^{i\bx \cdot \bw}
\]
\[
	\frac{1}{e^{i\bx \cdot \bw}}\CU ({\br}_{\bw} e^{i\bx \cdot \bw})  =   i \left( \begin{array}{c}
		w_1 w_2 +w_1w_3 - w_2^2-w_3^2 \\
		w_3 w_2 +w_1w_2 - w_1^2-w_3^2 \\
		w_1 w_3 +w_2w_3 - w_2^2-w_1^2 \\
	\end{array}\right) =   i \left( \begin{array}{c}
	w_1 w_2 +w_1w_3 + w_1^2 + \lambda_d^2 \\
	w_3 w_2 +w_1w_2 + w_2^2 + \lambda_d^2 \\
	w_1 w_3 +w_2w_3 + w_3^2 + \lambda_d^2 \\
	\end{array}\right) 
\]
which implies
\[
\CU ({\br}_{\bw} e^{i\bx \cdot \bw}) = i (s_{\bw} \bw + \lambda_d^2(1,1,1)^T)e^{i\bx \cdot \bw}.
\]
Hence
\[
\CU (\bv_{d,\bw} e^{i\bx \cdot \bw})=\frac{i}{\nu_{\bw}|\bw|^2}\{ \lambda_d^2 r_{\bw}e^{i\bx\cdot\bw} + \lambda_{d}(s_{\bw}w + \lambda_{d}^2 \begin{pmatrix}
	1\\
	1\\
	1
\end{pmatrix}e^{i\bx\cdot\bw})\}= \mu_d\bv_{d,\bw} e^{i\bx \cdot \bw}.
\]
If ${\br}_{\bw}=0$,  ${\bv}_{d,\bw}$ defined by Equation (\ref{v_d_w_1}) is not well-defined. But one can directly verify Equation (\ref{eq:normality_special}) and (\ref{eq:eigen_vector_special}) by using Equation (\ref{eq:curl}). By Equation (\ref{eq:curl}), 
\[
\CU (v_2 e^{i\bx \cdot \bw})= \frac{i \omega}{\sqrt 3}  \left( \begin{array}{c}
	-1 - \frac{1- i\sqrt 3}{2}\\
	1 + \frac{1+ i\sqrt 3}{2} \\
	 -i\sqrt 3  \\
\end{array}\right) e^{i\bx \cdot \bw} = -\sqrt 3 \omega v_2 e^{i\bx \cdot \bw} = \eta_{2,\omega} v_2 e^{i\bx \cdot \bw}.
\]
Similarly, one can verify $\CU (v_3 e^{i\bx \cdot \bw}) =\eta_{3,\omega} v_3 e^{i\bx \cdot \bw}$. 
\end{proof}
If $w_1=w_2=w_3:= \omega$,  we define
\begin{equation}\label{eq:v_special_def}
v_{2,\bw} = 
\begin{cases} 
	v_2 & \text{if } \omega \ge  0 \\ 
	v_3 & \text{if }  \omega < 0 
\end{cases}, \quad v_{3,\bw} = \bar {v}_{2,\bw}.
\end{equation}
	Then Equation (\ref{eq:eigen_vector_special}) is consistent to Equation (\ref{eq:eigen_vector_general}), i.e. Equations (\ref{eq:normality_general}-\ref{eq:eigen_vector_general}) hold for any $\bw\neq 0$ as long as $\bv_{2,\bw},\bv_{3,\bw}$ is defined by  Equation (\ref{v_d_w_1}) if ${\br}_{\bw} \neq 0$, and by Equation (\ref{eq:v_special_def}) if  ${\br}_{\bw}=0$.  For example,  if $\omega<0$ then
	\[
	\CU v_{2,\bw} e^{i\bx \cdot \bw} = \CU v_{3}e^{i\bx \cdot \bw} =  \omega \sqrt{3}  v_{3}e^{i\bx \cdot \bw} = -|\bw|  v_{2,\bw}e^{i\bx \cdot \bw} = \mu_{2,\bw} v_{2,\bw}e^{i\bx \cdot \bw}.
	\]
\begin{rem}
	In the rest of the paper, we shall apply Equation (\ref{eq:eigenvalue}, \ref{eq:normality_general}) and (\ref{eq:eigen_vector_general}) for any $\bw$  under the assumption that $\{v_{d,\bw}\}_{d=2}^3$ is defined by (\ref{v_d_w_1}) if $\br_{\bw} \neq 0$ or by (\ref{eq:v_special_def}) otherwise.  
	\begin{enumerate}
		\item Note that we have
		%\begin{equation}\label{eq:v_prop}
		\[
		v_{1,-\bw} = -v_{1,\bw},  \quad v_{2,-\bw} = v_{3,\bw}, \quad v_{3,-\bw} = v_{2,\bw}.
		\]
		%\end{equation}
		By Equation (\ref{eq:Phi_w}) and (\ref{eq:eigen_vector_general}), we have
		%\begin{equation}\label{eq:Phi_w_case}
		\[
			\Phi_{\bw} v_{d,\bw} = -i\mu_{d,\bw} v_{d,\bw} = \begin{cases} 
	0 & \text{if } d=1, \\ 
	i|\bw| v_{2,\bw}  & \text{if } d=2,\\
	-i |\bw|  v_{3,\bw} & \text{if } d=3.
\end{cases}		
		\]
		%\end{equation}
		\item 
		Let
		\[
				R_{\bw} = \frac12 (v_{2,\bw} + \bar{v}_{2,\bw}),  \quad I_{\bw} = \frac1{2i} (v_{2,\bw} - \bar{v}_{2,\bw}).
		\]
		We have
		\[
		v_{2,\bw} = R_{\bw} + iI_{\bw}, \quad v_{3,\bw} = \bar{v}_{2,\bw} = R_{\bw} - iI_{\bw}.
		\]
		One can verify directly $(R_{\bw},I_{\bw},\bw)$ forms a orthogonal base of $R^3$ for any $\bw\neq 0$ 
		\[
				\bw\cdot R_{\bw} =\bw\cdot I_{\bw}=R_{\bw}\cdot I_{\bw}=0, \quad |I_{\bw}| = |R_{\bw}| = \frac1{\sqrt 2}.
		\]
	\end{enumerate}
\end{rem}
%\section{The solution in a base subspace generated by $e^{i\bw\cdot \bx}$}
We are ready to derive the analytic solution if the initial condition is in the subspace $V^\bw$.  
\begin{prop} For a given initial function $F_{0,\bw} (\bx)=\ba_{\bw} e^{i\bx \cdot \bw}\in V^\bw$,  the solution of Equation \eqref{maxwell_complex} can be formulated as 
\begin{equation}\label{eq:formula_3}
F_{\alpha_{\bw}, \bw}(t,\bx) = \sum_{1\le d \le 3} e^{\frac{-it\mu_{d,\bw}} {\sqrt {\mu\epsilon}}} \alpha_{d,\bw} \bv_{d,\bw} e^{i\bx \cdot \bw},
\end{equation}
where	
\begin{equation}\label{eq:alpha}
	\alpha_{\bw}:=	\left( \begin{array}{c}
		\alpha_{1,\bw}\\
		\alpha_{2,\bw}\\
		\alpha_{3,\bw}\\
	\end{array}\right)=V^{\dagger}_{\bw}\left( \begin{array}{c}
		a_{1,\bw} \\
		a_{2,\bw} \\
		a_{3,\bw} \\
	\end{array}\right).
\end{equation}	
Therefore, the associated solution  $E_{\alpha_{\bw}, \bw}$ and $H_{\alpha_{\bw}, \bw}$ of Maxwell Equation \eqref{maxsys} can be formulated by 
\begin{equation} \label{eq:E_H_simple}
	E_{\alpha_{\bw}, \bw}(\bx,t) = \frac{1}2 (F_{\alpha_{\bw}, \bw}(\bx,t) + \bar F_{\alpha_{\bw}, \bw}(\bx, t)), \quad E_{\alpha_{\bw}, \bw}(\bx,t) = \frac{i}{2} (\bar F_{\alpha_{\bw}, \bw}(\bx, t) - F_{\alpha_{\bw}, \bw}(\bx,t)).
\end{equation}
\end{prop}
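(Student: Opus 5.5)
The plan is to verify directly that the right-hand side of (\ref{eq:formula_3}) satisfies both the evolution equation and the initial condition of (\ref{maxwell_complex}). Uniqueness of solutions (\cite{Leis86}, or equivalently the semigroup representation (\ref{eq-acp}) restricted to the finite-dimensional invariant space $V^{\bw}$) then identifies it as \emph{the} solution. Since $A=\CU/\sqrt{\mu\eps}$ maps $V^{\bw}$ into itself by (\ref{eq:curl}), the problem reduces to a linear ODE system on $\mathbb{C}^3$. The eigen-decomposition from Lemma \ref{lem-eigenvalues}, extended to all $\bw\neq 0$ via (\ref{eq:v_special_def}) and the subsequent Remark, diagonalizes this system.

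\textbf{Initial condition.} At $t=0$ the exponentials equal $1$, so
\[
F_{\alpha_{\bw},\bw}(0,\bx)=\sum_{d}\alpha_{d,\bw}\bv_{d,\bw}e^{i\bx\cdot\bw}=V_{\bw}\alpha_{\bw}e^{i\bx\cdot\bw}=V_{\bw}V_{\bw}^{\dagger}\ba_{\bw}e^{i\bx\cdot\bw}=\ba_{\bw}e^{i\bx\cdot\bw},
\]
where the last step uses the unitarity (\ref{eq:normality_general}). Since $V_{\bw}$ is square, $V_{\bw}V_{\bw}^\dagger=I_3$ is equivalent to $V_{\bw}^\dagger V_{\bw}=I_3$. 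This also shows that $\alpha_{\bw}$ is the coordinate vector of $\ba_{\bw}$ in the orthonormal basis $\{\bv_{d,\bw}\}$.

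\textbf{Evolution equation.} Differentiating term by term in $t$ gives
\[
\partial_t F=\sum_d \frac{-i\mu_{d,\bw}}{\sqrt{\mu\eps}}e^{-it\mu_{d,\bw}/\sqrt{\mu\eps}}\alpha_{d,\bw}\bv_{d,\bw}e^{i\bx\cdot\bw}.
\]
On the other hand, by linearity and (\ref{eq:eigen_vector_general}),
\[
-iAF=\frac{-i}{\sqrt{\mu\eps}}\sum_d e^{-it\mu_{d,\bw}/\sqrt{\mu\eps}}\alpha_{d,\bw}\,\CU(\bv_{d,\bw}e^{i\bx\cdot\bw})=\frac{-i}{\sqrt{\mu\eps}}\sum_d e^{-it\mu_{d,\bw}/\sqrt{\mu\eps}}\alpha_{d,\bw}\,\mu_{d,\bw}\bv_{d,\bw}e^{i\bx\cdot\bw},
\]
and the two expressions coincide. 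Equivalently, $e^{-itA}$ acts on each eigenvector as multiplication by $e^{-it\mu_{d,\bw}/\sqrt{\mu\eps}}$, so the series (\ref{eq-acp}) can be summed termwise.

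\textbf{Recovering $\EE$ and $\HH$.} Because $\CU$ and $\partial_t$ are real operators, splitting $\FF=\sqrt{\eps}\EE+i\sqrt{\mu}\HH$ into real and imaginary parts recovers the two curl equations in (\ref{maxsys a}). This requires the initial data $\sqrt{\eps}E_0,\sqrt{\mu}H_0$ to be real, so that the real-valued $\EE,\HH$ can be read off. Then $\Re F$ and $\Im F$ are obtained from $\tfrac12(F+\bar F)$ and $\tfrac{i}{2}(\bar F-F)$ respectively, which gives (\ref{eq:E_H_simple}) up to the scaling factors $\sqrt{\eps},\sqrt{\mu}$; the second formula there is evidently meant to define $\HH$.

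\textbf{Main obstacle.} The only delicate point is this last step. A single mode $\ba_{\bw}e^{i\bx\cdot\bw}$ generally does not have real and imaginary parts corresponding to the real fields $\EE,\HH$. I would handle it by arguing that $\CU$ commutes with complex conjugation, so the real and imaginary parts of $F$ solve the real system (\ref{eq:acp}) with initial data $\Re F_0$ and $\Im F_0$. One must also keep the degenerate case $\br_{\bw}=0$ in view, but the Remark already guarantees that (\ref{eq:normality_general})–(\ref{eq:eigen_vector_general}) hold there as well.
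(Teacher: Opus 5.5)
Your proposal is correct and follows essentially the same route as the paper. Both arguments expand $\ba_{\bw}$ in the orthonormal eigenbasis using $V_{\bw}V_{\bw}^{\dagger}=I_3$, then use the eigen-relation (\ref{eq:eigen_vector_general}) to let $e^{-itA}$ (or, as you do, a direct check of $\partial_t F=-iAF$) act mode by mode.

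Your additional remarks are sound. The second formula for $E_{\alpha_{\bw},\bw}$ is indeed meant to define $\HH$ (up to the factors $\sqrt{\eps},\sqrt{\mu}$). The ``obstacle'' you flag is not a real one: any $F_{0,\bw}$ has real-valued real and imaginary parts, and these solve the real system by the conjugation-invariance of $\CU$ that you already invoke.
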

\begin{proof}
Since $V^{\dagger}_{\bw}$ is unitary,  we can decompose $F_{0,\bw} (\bx)$ as 
\begin{equation}\label{eq:a_represent_2}
	F_{0,\bw} (\bx)= \sum_{1\le d\le 3}\alpha_{d,\bw} \bv_{d,\bw}e^{i\bx \cdot \bw},
\end{equation}
where $\alpha_{d,\bw}$ is defined by (\ref{eq:alpha}).  By Equation (\ref{eq-acp}), we have
\begin{eqnarray*} 
	e^{-it A } (\ba_{\bw} e^{i\bx \cdot \bw}) &=& \sum_{1\le d \le 3}\sum_{n\ge 0} \frac{(-itA )^n}{n!} \alpha_{d,\bw} \bv_{d,\bw}e^{i\bx \cdot \bw}\nonumber\\
	&=& \sum_{1\le d \le 3}\sum_{n\ge 0} \frac{ (\frac{-it}{\sqrt {\mu\epsilon}})^n}{n!} (\mu_{d,\bw})^n   \alpha_{d,\bw} \bv_de^{i\bx \cdot \bw} \nonumber\\
	&=& \sum_{1\le d \le 3} e^{\frac{-it\mu_{d,\bw}} {\sqrt {\mu\epsilon}}} \alpha_{d,\bw} \bv_{d,\bw} e^{i\bx \cdot \bw}.
\end{eqnarray*}
It is easy to directly check that $F_{\bw,\alpha_{\bw}}(t,\bx)$ by Equation (\ref{eq:formula_3}) does solve (\ref{maxwell_complex}).
\end{proof}
The analytic representation (\ref{eq:E_H_simple}) enables us to establish certain properties of the solution. 
\begin{prop}\label{prop:property_maxwell_1} Let $ E_{\alpha_{\bw}, \bw}(\bx,t)$ and $H_{\alpha_{\bw}, \bw}(\bx,t)$ be defined by (\ref{eq:E_H_simple}), then 
	\begin{enumerate}
		\item $|E_{\alpha_{\bw}, \bw}(\bx,t)|^2 + |H_{\alpha_{\bw}, \bw}(\bx,t)|^2$ is constant.
		\item $ |E_{\alpha_{\bw}, \bw}(\bx,t)|$, $|H_{\alpha_{\bw}, \bw}(\bx,t)|$ and $E_{\alpha_{\bw}, \bw}(\bx,t) \cdot H_{\alpha_{\bw}, \bw}(\bx,t)$ are stationary, i.e. they are not dependent on $t$.  
	\end{enumerate}
\end{prop}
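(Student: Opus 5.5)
The plan is to work entirely with the complex field $F=F_{\alpha_{\bw},\bw}$ from (\ref{eq:formula_3}). I read (\ref{eq:E_H_simple}) as $E=\mathrm{Re}\,F$ and $H=\mathrm{Im}\,F$; the second formula there is the one for $H$. This agrees with $\FF=\sqrt{\eps}\EE+\sqrt{\mu}\HH\, i$ up to the constant factors $\sqrt\eps,\sqrt\mu$, which play no role. Both claims then reduce to two elementary identities for a complex vector $F=E+iH$ with $E,H$ real:
\[
F\cdot\bar F=|E|^2+|H|^2,\qquad F\cdot F=|E|^2-|H|^2+2i\,E\cdot H .
\]
So (1) amounts to showing that the Hermitian quantity $F\cdot\bar F$ is constant. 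For (2), it suffices to show that the bilinear quantity $F\cdot F$ does not depend on $t$. Then $|E|^2$ and $|H|^2$ are recovered from the sum and difference of $F\cdot\bar F$ and $\mathrm{Re}(F\cdot F)$, and $E\cdot H=\frac12\mathrm{Im}(F\cdot F)$.

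First I write $\theta=\bx\cdot\bw$ and $\tau=t|\bw|/\sqrt{\mu\eps}$. By (\ref{eq:eigenvalue}) the exponents are $-it\mu_{d,\bw}/\sqrt{\mu\eps}=0,\ i\tau,\ -i\tau$, so
\[
F=e^{i\theta}\bigl(\alpha_{1,\bw}\bv_{1,\bw}+\alpha_{2,\bw}e^{i\tau}\bv_{2,\bw}+\alpha_{3,\bw}e^{-i\tau}\bv_{3,\bw}\bigr).
\]
For (1), I use that $V_{\bw}$ is a square matrix with $V_{\bw}V_{\bw}^\dagger=I_3$ by (\ref{eq:normality_general}). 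Hence $V_{\bw}^\dagger V_{\bw}=I_3$ as well, so the columns $\bv_{d,\bw}$ are orthonormal for the Hermitian product. The phases $e^{i\theta}$ and $e^{\pm i\tau}$ all have modulus one, so $F\cdot\bar F=\sum_d|\alpha_{d,\bw}|^2$. This is independent of both $t$ and $\bx$.

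For (2), I expand $F\cdot F$ bilinearly, without conjugation. The needed bilinear products come from the Remark. Since $\bv_{1,\bw}=\bw/|\bw|$ is real, $\bv_{1,\bw}\cdot\bv_{1,\bw}=1$. Writing $\bv_{2,\bw}=R_{\bw}+iI_{\bw}$ and $\bv_{3,\bw}=R_{\bw}-iI_{\bw}$, the relations $\bw\cdot R_{\bw}=\bw\cdot I_{\bw}=0$ give $\bv_{1,\bw}\cdot\bv_{2,\bw}=\bv_{1,\bw}\cdot\bv_{3,\bw}=0$. The relations $R_{\bw}\cdot I_{\bw}=0$ and $|R_{\bw}|=|I_{\bw}|=1/\sqrt2$ give
\[
\bv_{2,\bw}\cdot\bv_{2,\bw}=|R_{\bw}|^2-|I_{\bw}|^2+2iR_{\bw}\cdot I_{\bw}=0,
\]
and likewise $\bv_{3,\bw}\cdot\bv_{3,\bw}=0$, while $\bv_{2,\bw}\cdot\bv_{3,\bw}=|R_{\bw}|^2+|I_{\bw}|^2=1$. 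Every term in $F\cdot F$ carrying $e^{\pm i\tau}$ or $e^{\pm2i\tau}$ therefore vanishes, and
\[
F\cdot F=e^{2i\theta}\bigl(\alpha_{1,\bw}^2+2\alpha_{2,\bw}\alpha_{3,\bw}\bigr),
\]
which is free of $t$. Combined with (1), this gives that $|E|$, $|H|$ and $E\cdot H$ are stationary. They may depend on $\bx$, which is all that is claimed.

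The only nontrivial input is the isotropy $\bv_{2,\bw}\cdot\bv_{2,\bw}=\bv_{3,\bw}\cdot\bv_{3,\bw}=0$ together with $\bv_{1,\bw}\perp\bv_{2,\bw},\bv_{3,\bw}$ in the bilinear sense. This rests on the orthogonality and norm facts for $(R_{\bw},I_{\bw},\bw)$ stated in the Remark, which I will take as given. I would double-check them in the degenerate case $\br_{\bw}=0$ using (\ref{v_d_w_2}) directly. There, for instance, $v_2\cdot v_2=\frac13\bigl(\frac{(1+i\sqrt3)^2+(1-i\sqrt3)^2}{4}+1\bigr)=0$.
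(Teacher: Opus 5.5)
Your proof is correct and is essentially the paper's argument. The paper expands $(F\pm\bar F)\cdot(F\pm\bar F)$ into $\pm 2\sum_d|\alpha_{d,\bw}|^2+I+\bar I$ and $(F+\bar F)\cdot(F-\bar F)$ into $I-\bar I$, where $I=(\alpha_{1,\bw}^2+2\alpha_{2,\bw}\alpha_{3,\bw})e^{2i\bx\cdot\bw}$ is exactly your $F\cdot F$ and $\sum_d|\alpha_{d,\bw}|^2$ is your $F\cdot\bar F$. Your version simply makes explicit what the paper uses silently: the bilinear relations $\bv_{2,\bw}\cdot\bv_{2,\bw}=\bv_{3,\bw}\cdot\bv_{3,\bw}=0$, $\bv_{2,\bw}\cdot\bv_{3,\bw}=1$ and $\bv_{1,\bw}\cdot\bv_{2,\bw}=\bv_{1,\bw}\cdot\bv_{3,\bw}=0$, together with the resulting cancellation of the time phases.
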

The details of proof is refereed to Appendix \ref{app:property_maxwell_1}. Note that $E^2+H^2$ represents the total energy of the  $(E,H)$, which is expected to be constant according to the conservation principle of electromagnetic field.
Since $E_{\alpha_{\bw}, \bw}(\bx,t) \cdot H_{\alpha_{\bw}, \bw}(\bx,t)$  is stationary, we have
\begin{cor} \label{cor:2}
$E_{\alpha_{\bw}, \bw}$ is orthogonal to $H_{\alpha_{\bw}, \bw}$ if they are initially orthogonal to each other, which is equivalent to the condition
\[
\alpha_{1,\bw}^2 + 2  \alpha_{2,\bw}\alpha_{3,\bw} =0. 
\]
where $\alpha_{d,\bw}$ is defined in Equation (\ref{eq:a_represent_2}) to formulate initial value $F_{0,\bw}(\bx)$. In particular, if $E_{\alpha_{\bw}, \bw}$ and $H_{\alpha_{\bw}, \bw}$ are transverse,  i.e. they are perpendicular to wave propagation direction $\bw$,  then they are perpendicular to each other if and only if $\alpha_{2,\bw}\alpha_{3,\bw}=0$.
\end{cor}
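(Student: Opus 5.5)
The plan is to express $E\cdot H$ through the complex field $F$ and use the eigenvector structure from Lemma \ref{lem-eigenvalues} and the Remark following it. Reading (\ref{eq:E_H_simple}) as $E=\mathrm{Re}\,F$ and $H=\mathrm{Im}\,F$ (up to the constant scalings $\sqrt{\eps},\sqrt{\mu}$, which do not affect orthogonality), I use the \emph{bilinear}, non-conjugated product $F\cdot F=\sum_j F_j^2$. Then
\[
F\cdot F=(E+iH)\cdot(E+iH)=|E|^2-|H|^2+2i\,E\cdot H ,
\]
so $E\cdot H=\tfrac12\,\mathrm{Im}(F\cdot F)$. It therefore suffices to compute $F\cdot F$ explicitly from (\ref{eq:formula_3}).

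First I will tabulate the bilinear products $\bv_{d,\bw}\cdot\bv_{d',\bw}$ using $\bv_{1,\bw}=\bw/|\bw|$ and $\bv_{2,\bw}=R_{\bw}+iI_{\bw}$, $\bv_{3,\bw}=R_{\bw}-iI_{\bw}$, with the orthogonality relations of the Remark ($\bw\perp R_{\bw}$, $\bw\perp I_{\bw}$, $R_{\bw}\perp I_{\bw}$, $|R_{\bw}|=|I_{\bw}|=1/\sqrt2$). This gives
\[
\bv_1\cdot\bv_1=1,\qquad \bv_1\cdot\bv_2=\bv_1\cdot\bv_3=0,\qquad \bv_2\cdot\bv_2=\bv_3\cdot\bv_3=|R_{\bw}|^2-|I_{\bw}|^2=0,\qquad \bv_2\cdot\bv_3=|R_{\bw}|^2+|I_{\bw}|^2=1 .
\]
Substituting into (\ref{eq:formula_3}) yields
\[
F\cdot F=e^{2i\bx\cdot\bw}\Big(\alpha_{1,\bw}^2+2\alpha_{2,\bw}\alpha_{3,\bw}\,e^{-it(\mu_{2,\bw}+\mu_{3,\bw})/\sqrt{\mu\eps}}\Big)=e^{2i\bx\cdot\bw}\big(\alpha_{1,\bw}^2+2\alpha_{2,\bw}\alpha_{3,\bw}\big),
\]
because $\mu_{3,\bw}=-\mu_{2,\bw}$. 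This expression is independent of $t$, which recovers the stationarity of $E\cdot H$ from Proposition \ref{prop:property_maxwell_1}. Hence if $E\perp H$ at $t=0$, then $E\perp H$ for all $t$.

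For the equivalence, set $Q=\alpha_{1,\bw}^2+2\alpha_{2,\bw}\alpha_{3,\bw}$. Orthogonality at $t=0$ for every $\bx$ means $\mathrm{Im}\big(e^{2i\bx\cdot\bw}Q\big)=0$ for all $\bx$. Since $\bw\neq0$, the phase $2\bx\cdot\bw$ ranges over all of $\mathbb{R}$, so $\mathrm{Im}(e^{i\theta}Q)=0$ for every $\theta$. This forces $Q=0$, and the converse is immediate. This is the one step needing care: orthogonality must be interpreted as holding for all $\bx$, not merely at a single point.

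Finally, for the transverse case, $E,H\perp\bw$ means the complex vector $\ba_{\bw}$ is orthogonal to the real vector $\bv_{1,\bw}$. By (\ref{eq:alpha}), $\alpha_{1,\bw}=\bar{\bv}_{1,\bw}^T\ba_{\bw}=\bv_{1,\bw}\cdot\ba_{\bw}=0$. The condition $Q=0$ then reduces to $\alpha_{2,\bw}\alpha_{3,\bw}=0$.
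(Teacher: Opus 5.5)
Your proof is correct and follows essentially the paper's route. The appendix computes $4i\,E\cdot H=(F+\bar F)\cdot(F-\bar F)=I-\bar I$ with $I=(\alpha_{1,\bw}^2+2\alpha_{2,\bw}\alpha_{3,\bw})e^{2i\bx\cdot\bw}$, which is your identity $E\cdot H=\tfrac12\mathrm{Im}(F\cdot F)$, built on the same bilinear products ($\bv_{1,\bw}\cdot\bv_{1,\bw}=\bv_{2,\bw}\cdot\bv_{3,\bw}=1$, all others zero); your all-$\bx$ argument forcing $Q=0$ and your observation that $\alpha_{1,\bw}=\bv_{1,\bw}\cdot\ba_{\bw}=0$ in the transverse case fill in steps the paper leaves implicit.
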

Note that the vector $\mathbf{S} = \EE \times \HH$, Poynting Vector,  represents the directional energy flux density (power per unit area) of the electromagnetic field. 

%\section{Analytic solution in simple case}\label{sub:main_theorem}
With the solution \ref{eq:formula_3} of the Maxwell's equation with initial functions in the subspace $V^{\bw}$, we can derive an close form solution in a general setting as follows.
\begin{theo} Assume that $F_0(\bx)$ is periodic and can be expressed by its Fourier Series 
	%\begin{equation} \label{eq:main_init}
		\[
			F_0(\bx) = f_0 + \sum_{\bw\neq 0}  a_{\bw} e^{i(\bx\cdot \bw)} = c_0 + \sum_{\bw \neq 0}  \sum_{1\le d\le 3}  \alpha_{d;\bw} {\bv}_{d,\bw} e^{i(\bx\cdot \bw)}  =:c_0 + \sum_{\bw \neq 0} F_{0,\bw}(\bx),
		\]
	%\end{equation}
where $f_0\in C^3$ is constant and $\alpha_{d;\bw}$ is determined by Equation (\ref{eq:alpha}). Further assume that 
	\begin{equation}\label{eq:abs_convergence_condition}
		\sum_{\bw \neq 0} |\bw| |a_{\bw}| < \infty 
	\end{equation}
	converges. 
	Then Equation (\ref{maxwell_complex}) is solved by
	\begin{equation}\label{eq:main_formula}
		\FF(\bx,t)= f_0 + \sum_{\bw \neq 0}\sum_{1\le d\le 3} e^{\frac{-it \mu_{d,\bw}}{\sqrt{\mu\epsilon}}} \alpha_{d;\bw}  {\bv}_{d,\bw}e^{i(\bx\cdot \bw)}.
	\end{equation}
\end{theo}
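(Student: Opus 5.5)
The plan is to superpose the single-mode solutions of the preceding Proposition and justify termwise differentiation through the absolute convergence hypothesis \eqref{eq:abs_convergence_condition}. For each $\bw\neq 0$, that Proposition gives
\[
F_{\alpha_{\bw},\bw}(\bx,t)=\sum_{1\le d\le 3} e^{\frac{-it\mu_{d,\bw}}{\sqrt{\mu\epsilon}}}\alpha_{d;\bw}\bv_{d,\bw}e^{i\bx\cdot\bw},
\]
which solves \eqref{maxwell_complex} with initial value $F_{0,\bw}=a_{\bw}e^{i\bx\cdot\bw}$. For the constant mode, $\CU f_0=0$, so $f_0$ is a stationary solution. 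Note that the constant in the statement, written both $f_0$ and $c_0$, is the same vector. By linearity of $A=\CU/\sqrt{\mu\epsilon}$, any finite partial sum $\FF_N=f_0+\sum_{0<|\bw|\le N}F_{\alpha_{\bw},\bw}$ solves \eqref{maxwell_complex} with initial value equal to the corresponding partial Fourier sum of $F_0$.

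Next I bound the size of each mode. Since $V_{\bw}$ is unitary by \eqref{eq:normality_general}, $\alpha_{\bw}=V_{\bw}^\dagger a_{\bw}$ satisfies $|\alpha_{\bw}|=|a_{\bw}|$. The eigenvalues $\mu_{d,\bw}$ are real by \eqref{eq:eigenvalue}, so each exponential factor has modulus one and the columns $\bv_{d,\bw}$ are orthonormal. Hence
\[
|F_{\alpha_{\bw},\bw}(\bx,t)|=|a_{\bw}|
\]
for all $(\bx,t)$. Differentiating termwise, $\partial_t F_{\alpha_{\bw},\bw}$ and each $\partial_{x_j}F_{\alpha_{\bw},\bw}$ are bounded pointwise by $C|\bw||a_{\bw}|$; for $\partial_t$ this uses $|\mu_{d,\bw}|\le|\bw|$. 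The same bound $|\bw||a_{\bw}|$ controls $\CU F_{\alpha_{\bw},\bw}$, since by \eqref{eq:curl} it equals $i\Phi_{\bw}(\cdot)$ and $\|\Phi_{\bw}\|=|\bw|$.

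With these bounds, the Weierstrass M-test and \eqref{eq:abs_convergence_condition} (which also gives $\sum|a_{\bw}|<\infty$, because the nonzero lattice frequencies satisfy $|\bw|\ge c>0$; I will assume the frequencies lie in a lattice, as periodicity implies) show three things. First, the series \eqref{eq:main_formula} converges uniformly on $\mathbb{R}^3\times\mathbb{R}$. Second, the series of its $t$-derivatives and first $x$-derivatives also converge uniformly. Third, the series at $t=0$ sums to $F_0$. The standard theorem on differentiating uniformly convergent series then shows that $\FF$ is $C^1$, that $\partial_t\FF=\sum\partial_tF_{\alpha_{\bw},\bw}$, and that $\CU\FF=\sum\CU F_{\alpha_{\bw},\bw}$. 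The equation $\partial_t F_{\alpha_{\bw},\bw}=-iAF_{\alpha_{\bw},\bw}$ holds mode by mode, so summing gives $\partial_t\FF=-iA\FF$, while $\FF(\bx,0)=F_0(\bx)$.

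The main obstacle is this interchange of differentiation and summation. It is exactly where the weight $|\bw|$ in \eqref{eq:abs_convergence_condition} is needed, rather than plain summability of $|a_{\bw}|$. It also requires the uniform-in-$\bw$ control of the eigenvector matrices, which is why unitarity \eqref{eq:normality_general} must hold for every $\bw\neq0$, including the degenerate case $\br_{\bw}=0$ handled by \eqref{eq:v_special_def}. If uniqueness is wanted as well, it follows from the uniqueness result cited from \cite{Leis86}.
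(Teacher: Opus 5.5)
Your proposal is correct and follows the paper's route: superpose the single-mode solutions $F_{\alpha_{\bw},\bw}$, then justify passing $\partial_t$ and $\CU$ through the sum using the weighted condition $\sum_{\bw\neq 0}|\bw||a_{\bw}|<\infty$, with uniqueness deferred to \cite{Leis86}. You also make explicit the estimates the paper leaves to the reader: $|\alpha_{\bw}|=|a_{\bw}|$ from unitarity of $V_{\bw}$, the bound $|\mu_{d,\bw}|\le|\bw|$, and the Weierstrass M-test, with $\sum|a_{\bw}|<\infty$ coming from the lattice structure of the frequencies.
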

\begin{proof} One can follow the derivation $F_{\alpha_{\bw}, \bw}(t,\bx)$ in Equation (\ref{eq:formula_3}) to formally obtain  the solution by Equation (\ref{eq-acp}).
	\begin{eqnarray*}
		\FF(\bx,t)=	e^{-itA} F_0 &=& f_0 + \sum_{\bw \neq 0}\sum_{1\le d\le 3} e^{\frac{-i t \mu_{d,\bw}}{\sqrt{\mu\epsilon}}} \alpha_{d;\bw}  {\bv}_{d,w}e^{i(\bx\cdot \bw)} .
	\end{eqnarray*}
	Under the assumption (\ref{eq:abs_convergence_condition}), $\FF_0$ is continuously differentiable. One can check directly that Equation (\ref{eq:main_formula}) indeed solves \eqref{maxsys} by passing relevant first order derivative operators into the summation in Equation (\ref{eq:main_formula}). The uniqueness of the solution is well-known due to the smoothness of $\FF_0$ \cite{Leis86} .  
\end{proof}
Since $\lambda_{\bw;1}=0$, we can rewrite Equation (\ref{eq:main_formula}) as 
%\begin{equation}\label{eq:main_formula_v2}
\[
\FF(\bx,t)-F(\bx,0)=\sum_{\bw \neq 0}\sum_{2\le d\le 3} (e^{\frac{-it \mu_{d,\bw}}{\sqrt{\mu\epsilon}}}-1) \alpha_{d;\bw}  {\bv}_{d,\bw}e^{i(\bx\cdot \bw)}.
\]
%\end{equation}
Let ${\bv}_{d,\bw}$ be defined by Equation  (\ref{v_d_w_1}) or Equation (\ref{v_d_w_2}).  It is easy to check that 
\begin{equation}\label{eq:cor}
	\nabla\cdot  ({\bv}_{d,\bw}e^{ix\cdot \bw})=0, \quad  d\in \{2,3\},
\end{equation}
which implies 
\begin{cor}\label{cor:conservation-divergence}
	$H$ and $E$ keep the conservation of divergence, i.e. 
	%\begin{equation}\label{maxsys b}
	\[
	\nabla\cdot (\EE - \EE_0)= 0, \qquad  \nabla\cdot  ( \HH - \HH_0)= 0.
	\]
	%\end{equation}
\end{cor}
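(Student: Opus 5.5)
The plan is to read off the divergence of $\FF(\bx,t)-\FF(\bx,0)$ directly from the representation displayed just before the corollary,
\[
\FF(\bx,t)-\FF(\bx,0)=\sum_{\bw \neq 0}\sum_{2\le d\le 3} (e^{\frac{-it \mu_{d,\bw}}{\sqrt{\mu\epsilon}}}-1) \alpha_{d;\bw}  {\bv}_{d,\bw}e^{i(\bx\cdot \bw)}.
\]
Here the $d=1$ terms have dropped out because $\mu_{1,\bw}=0$. The constant mode $f_0$ has also cancelled.

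\textbf{Step 1: each remaining mode is divergence-free.} I would take the divergence term by term. For a single mode, $\nabla\cdot(\xi e^{i\bx\cdot\bw}) = i(\bw\cdot\xi)e^{i\bx\cdot\bw}$, so Equation (\ref{eq:cor}) reduces to $\bw\cdot\bv_{d,\bw}=0$ for $d=2,3$.

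\textbf{Step 2: verify $\bw\cdot\bv_{d,\bw}=0$.} I would do this in two cases.
\begin{itemize}
\item If $\br_{\bw}\neq 0$, use formula (\ref{v_d_w_1}). Since $\bw\cdot\br_{\bw}=0$ and $\bw\cdot(1,1,1)^T=s_{\bw}$, we get $\bw\cdot\big(\tfrac{s_{\bw}}{|\bw|^2}\bw-(1,1,1)^T\big)=s_{\bw}-s_{\bw}=0$.
\item If $\br_{\bw}=0$, then $\bw=\omega(1,1,1)^T$. The components of $v_2$ sum to $\frac{1}{\sqrt3}\big(\frac{1+i\sqrt3}{2}+\frac{1-i\sqrt3}{2}-1\big)=0$, and likewise for $v_3$.
\end{itemize}
Alternatively, both cases follow at once from the orthogonality $\bw\cdot R_{\bw}=\bw\cdot I_{\bw}=0$ in the Remark.

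\textbf{Step 3: interchange divergence and summation.} This is the main analytic obstacle. Each differentiated term is bounded by $|\bw|\,|\alpha_{d;\bw}|\,|\bv_{d,\bw}|\cdot 2$. Because $V_{\bw}$ is unitary, we have $|\alpha_{\bw}|=|a_{\bw}|$ and $|\bv_{d,\bw}|=1$. Hence the differentiated series is dominated by a constant times $\sum_{\bw\neq0}|\bw||a_{\bw}|$, which is finite by (\ref{eq:abs_convergence_condition}). The series therefore converges uniformly, and term-by-term differentiation is justified, as in the proof of the Theorem. It follows that $\nabla\cdot(\FF(\cdot,t)-\FF(\cdot,0))=0$.

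\textbf{Step 4: split into real and imaginary parts.} Since $\FF=\sqrt{\eps}\EE+i\sqrt{\mu}\HH$ and divergence is a real operator, the real part gives $\sqrt{\eps}\,\nabla\cdot(\EE-\EE_0)=0$ and the imaginary part gives $\sqrt{\mu}\,\nabla\cdot(\HH-\HH_0)=0$. Dividing by the positive constants $\sqrt{\eps}$ and $\sqrt{\mu}$ yields the corollary.
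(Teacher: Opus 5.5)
Your proposal is correct and follows essentially the same route as the paper. The paper also starts from the expansion of $\FF(\bx,t)-\FF(\bx,0)$ over the modes $d=2,3$ and invokes $\nabla\cdot(\bv_{d,\bw}e^{i\bx\cdot\bw})=0$; the check $\bw\cdot\bv_{d,\bw}=0$, the justification of term-by-term differentiation under (\ref{eq:abs_convergence_condition}), and the split into real and imaginary parts are details it leaves implicit and that you supply.
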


\section{The analytic solution with $\bJ = J^g(\bx)$}\label{sec:jg}
In this section, we consider a setting by adding a generator term $J^g$, which is independently of the local electromagnetic field. To simplify the notation, we shall work in Heaviside-Lorentz units where $\eps=\mu= 1$ in the rest of the paper without loss of generosity, which is equivalent to replace $\sqrt \mu \HH$  by $\HH$,  $\sqrt{\eps}  \EE$ by $\EE$, and scale variable $t$ by $\sqrt{\mu\eps}$.  We aim to derive the analytic solution for following Maxwell equations.
\begin{eqnarray}
	\frac{\partial \HH}{\partial t} &=&-\CU (\EE), \qquad  \frac{\partial \EE}{\partial t} = \CU  (\HH) - J^g(\bx), \label{maxsys a-2}\\
	\HH(\bx,0) &=& H_0(\bx) ,\qquad  \EE(\bx,0) =E_0(\bx), \label{IV-2}
\end{eqnarray}
where the generator $J^g$ is stable and thus is independent on time $t$.

With the tools developed in previous sections,  it turns out that the desired solutions can be directly constructed as follows. 
\begin{theo}\label{theorem-main-3}
Let $J^g(\bx)$ be continuous differentiable and can be represented by its Fourier series as follows
\[
J^g(\bx) = \sum_{\bw} a_{\bw}e^{i\bx\cdot \bw} = v_0 + \sum_{\bw\neq 0} \sum_{1\le d \le 3} \alpha_{d,\bw}\bv_{d,\bw} e^{i\bx\cdot \bw},
\]
where $v_0 \in C^3$ is constant. Define 
\[
\phi(\bx) = v_0 + \sum_{\bw \neq 0} \alpha_{1,\bw}\bv_{1,\bw} e^{i\bx\cdot \bw} , \quad \psi(\bx)= \sum_{\bw \neq 0} \sum_{2\le d \le 3} \frac{1}{\mu_{d,\bw}} \alpha_{d,\bw}\bv_{d,\bw} e^{i\bx\cdot \bw}.
\]
Let $U, V$ be the solution without the generator $J^g$ 
\begin{eqnarray*}
	\frac{\partial V}{\partial t} &=&-\CU (U), \qquad  \frac{\partial U}{\partial t} = \CU  (V), \label{maxsys a-home}\\
	V(\bx,0) &=& H_0(\bx) - \psi(\bx) ,\qquad  U(\bx,0) = E_0(\bx)
\end{eqnarray*}
Then the desired solution for (\ref{maxsys a-2}) and (\ref{IV-2}) is given by
%\begin{equation}
\[
\EE= U(\bx, t)- t\phi(\bx), \quad \HH = V(\bx, t) +\psi(\bx).	
\]
\end{theo}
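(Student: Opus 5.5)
The plan is a direct verification: substitute $\EE = U - t\phi$ and $\HH = V + \psi$ into (\ref{maxsys a-2}) and (\ref{IV-2}). Every needed identity follows from the eigenvector relation (\ref{eq:eigen_vector_general}), applied termwise to the Fourier series of $\phi$ and $\psi$.

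\textbf{Step 1: two algebraic identities.}
\begin{enumerate}
\item $\CU\,\phi = 0$. The constant $v_0$ has zero curl, and each term $\alpha_{1,\bw}\bv_{1,\bw}e^{i\bx\cdot\bw}$ lies in the kernel of $\CU$, since $\mu_{1,\bw}=0$.
\item $\CU\,\psi = \sum_{\bw\neq 0}\sum_{2\le d\le 3}\alpha_{d,\bw}\bv_{d,\bw}e^{i\bx\cdot\bw}$. In each term the factor $1/\mu_{d,\bw}$ cancels against the eigenvalue $\mu_{d,\bw}$. This is well defined because $\mu_{2,\bw} = -|\bw|$ and $\mu_{3,\bw} = |\bw|$ are nonzero for $\bw \neq 0$.
\end{enumerate}
Together these give
\[
\phi + \CU\,\psi = J^g .
\]

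\textbf{Step 2: the evolution equations.} Since $\psi$ does not depend on $t$,
\[
\partial_t \HH = \partial_t V = -\CU\,U = -\CU(\EE + t\phi) = -\CU\,\EE ,
\]
where the last equality uses $\CU\,\phi = 0$. Similarly,
\[
\partial_t \EE = \partial_t U - \phi = \CU\,V - \phi = \CU\,\HH - \CU\,\psi - \phi = \CU\,\HH - J^g ,
\]
using $\phi + \CU\,\psi = J^g$.

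\textbf{Step 3: initial conditions.} At $t=0$ we have $\EE(\bx,0) = U(\bx,0) = E_0$ and $\HH(\bx,0) = V(\bx,0) + \psi = H_0 - \psi + \psi = H_0$. Uniqueness then follows as in the homogeneous case: the difference of two solutions solves (\ref{maxsys a}) with zero data.

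\textbf{Main obstacle: justifying the termwise operations.} The hard part is analytic, not algebraic. We must justify applying $\CU$ termwise to the series for $\phi$ and $\psi$, and check that the homogeneous problem for $(U,V)$ with initial datum $H_0 - \psi$ falls under the preceding theorem.

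For $\psi$, the coefficients are $\alpha_{d,\bw}/|\bw|$. By the unitarity (\ref{eq:normality_general}), $|\alpha_{\bw}| = |a_{\bw}|$, so the coefficients of $\psi$ decay one order faster than those of $J^g$. Hence $\sum_{\bw} |\bw|\,|\alpha_{d,\bw}|/|\bw| = \sum_{\bw}|a_{\bw}|$, and $\psi$ satisfies a condition of type (\ref{eq:abs_convergence_condition}) as soon as $J^g$ has an absolutely summable Fourier series. For the projections $\phi$ and $\sum_{d\ge 2}\alpha_{d,\bw}\bv_{d,\bw}e^{i\bx\cdot\bw}$, the same unitarity bound gives absolute convergence.

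I would therefore strengthen the hypothesis slightly: take the continuous differentiability of $J^g$ to mean absolute summability of its Fourier coefficients, together with (\ref{eq:abs_convergence_condition}) for $H_0$ and $E_0$. Under this assumption every series involved converges absolutely together with its first derivatives, termwise differentiation is legitimate, and the preceding theorem supplies $(U,V)$.
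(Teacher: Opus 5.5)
Your proposal is correct and follows the paper's proof essentially line for line: the same identities $\CU\,\phi=0$ and $\CU\,\psi=J^g-\phi$, then $\partial_t\HH=-\CU(\EE+t\phi)=-\CU\,\EE$ and $\partial_t\EE=\CU(\HH-\psi)-\phi=\CU\,\HH-J^g$, plus the check of the initial data. Your regularity discussion goes beyond the paper, which takes termwise differentiation for granted, but it overstates one point: absolute summability of the coefficients $a_{\bw}$ gives $\psi\in C^1$ but not necessarily $\phi\in C^1$, so for a classical $\CU\,\EE=\CU(U-t\phi)$ you should assume $\sum_{\bw}|\bw|\,|a_{\bw}|<\infty$ for $J^g$ (or read $\CU\,\phi=0$ distributionally).
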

\begin{proof} It is clear that $(\EE,\HH)$ meets the initial conditions.
It is easy to see that
\begin{eqnarray*}
\frac{\partial \phi} {\partial t} &=&0, \quad  \CU  (\phi) =0, \\
\frac{\partial \psi} {\partial t}&=&0, \quad
\CU (\psi) =\sum_{\bw \neq 0} \sum_{2\le d \le 3}  \alpha_{d,\bw}\bv_{d,\bw} e^{i\bx\cdot \bw}  = J^g(\bx) -  \phi(\bx).
\end{eqnarray*}
We have
\begin{eqnarray*}
\dot \EE &=& \dot U - \phi(\bx) = \CU (V)- \phi(\bx) = \CU (\HH-\psi) - \phi(\bx)  = \CU (\HH) - J^g\\
\dot \HH &=& \dot V = -\CU (U) = - \CU (\EE + t\phi) = -\CU (\EE),
\end{eqnarray*}	
which concludes that $(\EE,\HH)$ are the desired solution.
\end{proof}

\section{Two demonstrative examples}\label{sec:example}
In this section, we provide two examples to demonstrate how to apply Equation (\ref{eq:main_formula}) to construct the analytic solutions of Maxwell's equations \eqref{maxsys} associated to the two types of eigenvector base discussed in Lemma \ref{lem-eigenvalues}.  We shall assume that $\mu=\epsilon=1$ in the rest of this section. 
\subsection{Eigenvector base with ${\br}_{\bw}=0$}
We aim to recover the solution of Equation (\ref{eq:main_formula}) with the initial conditions 
\begin{eqnarray*}
E(\bx, 0) &=&  \cos(2\pi(x_1+x_2+x_3))(1, -2, 1),\\
H(\bx, 0) &=&  \cos(2\pi(x_1+x_2+x_3))(\sqrt 3, 0, -\sqrt 3).
\end{eqnarray*}
The problem was studied in \cite{n5} and the analytical solution is known as 
\begin{eqnarray}
E(\bx, t) &=&  \cos(2\pi(x_1+x_2+x_3) -2\sqrt{3}\pi t)(1, -2, 1)\label{eq:sample_1_E} \\
H(\bx, t) &=&  \cos(2\pi(x_1+x_2+x_3) -2\sqrt{3}\pi t)(\sqrt 3, 0, -\sqrt 3)\label{eq:sample_1_H}.
\end{eqnarray}
Let $F(\bx, t)= E(\bx, t)+H(\bx, t)i$.  As the first step,  we represent $F(\bx, 0)$ in its Fourier series.  Let ${\bw}=\omega(1, 1,1)$ with $\omega=2\pi$,
\begin{eqnarray*}
F(\bx, 0) &=& (1 + i\sqrt 3 , -2, 1 - i\sqrt 3)^T \cos(2\pi(x_1+x_2+x_3) \\	
&=& 
	\left( \begin{array}{c}
		 \frac{1 +i\sqrt 3 }2\\
		-1\\
		\frac{1-i\sqrt 3}2\\
	\end{array}\right)  e^{i\bx \cdot \bw} + 
		\left( \begin{array}{c}
		\frac{1 + i\sqrt 3}2\\
		-1\\
		\frac{1 - i\sqrt 3}2\\
	\end{array}\right) \bar e^{i\bx \cdot \bw} =: F_1 + F_2.
\end{eqnarray*}
and $F_1(\bx, t),F_2(\bx, t)$ be the solutions with initial value $F_1$ and $F_2$ respectively. 
Applying $\ba =(\frac{1+ i\sqrt 3}2, -1, \frac{1- i\sqrt 3}2) $ to Equation (\ref{eq:alpha}), 
we obtain 
	\begin{equation*}
		\alpha_{\bw}=	V^{\dagger}_{\bw}\left( \begin{array}{c}
			a_{1,\bw} \\
			a_{2,\bw} \\
			a_{3,\bw} \\
		\end{array}\right) = \frac1{\sqrt 3} \left(
		\begin{array}{ccc}
		1 & 1 & 1\\
		\frac{1- i\sqrt 3}{2} & \frac{1+ i\sqrt 3}{2} & -1 \\
		\frac{1+ i\sqrt 3}{2} & \frac{1- i\sqrt 3}{2} &  -1
		\end{array}
		\right) \left( \begin{array}{c}
			\frac{1 + i\sqrt 3}2\\
			-1\\
			\frac{1 - i\sqrt 3}2\\
		\end{array}\right) = \sqrt 3 \left( \begin{array}{c}
		0\\
		0\\
		\frac{-1 + i\sqrt 3}2\\
		\end{array}\right).
	\end{equation*}
Hence 
\[
F_1(\bx, t) = \alpha_{3,\bw} v_{3,\bw} e^{i \bx \bw - i\mu_{3,\omega} t} = \frac{-1 + i\sqrt 3}2  \left( \begin{array}{c}
	\frac{1 - i\sqrt 3}2\\
	\frac{1 + i\sqrt 3}2\\
	-1\\
\end{array}\right) e^{i \bx \bw - i\mu_{3,\omega} t}  = \left( \begin{array}{c}
\frac{1 + i\sqrt 3}2\\
-1\\
\frac{1 - i\sqrt 3}2\\
\end{array}\right)e^{i \bx \bw - i\mu_{3,\omega} t},
\]
with 
\[
\mu_{3,\omega} = |\bw| = 2\pi \sqrt 3.  
\]
To cope with $F_2(\bx,t)$,  let $\hat\bw = \hat\omega(1,1,1)$ with $\hat\omega=-2\pi$. As in above treatment on $F_1(\bx,t)$, we have
\begin{equation*}
	\alpha_{\hat\bw}=	V^{\dagger}_{\hat\bw}\left( \begin{array}{c}
		a_{1,\hat\bw} \\
		a_{2,\hat\bw} \\
		a_{3,\hat\bw} \\
	\end{array}\right) = \frac1{\sqrt 3} \left(
	\begin{array}{ccc}
		1 & 1 & 1\\
		\frac{1+ i\sqrt 3}{2} & \frac{1- i\sqrt 3}{2} &  -1 \\
		\frac{1- i\sqrt 3}{2} & \frac{1+ i\sqrt 3}{2} & -1
	\end{array}
	\right) \left( \begin{array}{c}
		\frac{1 + i\sqrt 3}2\\
		-1\\
		\frac{1 - i\sqrt 3}2\\
	\end{array}\right) = \sqrt 3 \left( \begin{array}{c}
		0\\
		\frac{-1 + i\sqrt 3}2\\
		0
	\end{array}\right).
\end{equation*}
So
\[
F_2(\bx, t) = \alpha_{2,\hat\bw} v_{2,\hat\bw} e^{i \bx \hat\bw - i\mu_{2,\hat\omega} t} =\alpha_{3,\bw} v_{3,\bw} e^{i \bx \hat\bw - i\mu_{2,\hat\omega} t} = \left( \begin{array}{c}
	\frac{1 + i\sqrt 3}2\\
	-1\\
	\frac{1 - i\sqrt 3}2\\
\end{array}\right) e^{i \bx \hat\bw - i\mu_{2,\hat\omega} t} 
\]
with 
\[
\mu_{2,\hat \omega} = -|\hat\bw| = -2\pi \sqrt 3.
\]
Hence
\[
F_1(\bx, t) + F_2(\bx, t) = \left( \begin{array}{c}
	{1 + i\sqrt 3}\\
	-2\\
	{1 - i\sqrt 3}\\
\end{array}\right) \cos(2\pi(x_1+x_2+x_3-\sqrt 3 t)),
\]
which recoveries the solution  (\ref{eq:sample_1_E}) and (\ref{eq:sample_1_H}).

\subsection{Eigenvector base with ${\br}_{\bw}\neq 0$}
The second demonstrative example has the following initial condition
\[
F_0(\bx) = \cos({\bw}\cdot (\bx))(1,1,1)^T + \sin({\bw}\cdot (\bx))(1,1,1)^T i = (1,1,1) e^{i\bx \cdot \bw}
\]
with ${\bw}=(\pi, 2\pi, -3\pi)$. Applying $\ba =(1,1,1)^T$, we obtain
\[
	\mu_{1,\bw}=0, \quad \mu_{2,\bw}= -\pi \sqrt{14}=-\mu_{3,\bw}, \quad \nu_{\bw} = \sqrt 6, \quad \gamma_{\bw}= \pi \sqrt{42}, \quad {\br}_{\bw} = \pi (5, -4,-1)^T
\]
and
\begin{eqnarray*} 	
	v_{1,\bw}= \left( \begin{array}{c}
		\frac{1}{\sqrt{14}}\\
		\frac{2}{\sqrt{14}}\\
		\frac{-3}{\sqrt{14}}\\
	\end{array}\right), \quad
	v_{2,\bw}= \frac{1}{\nu_{\bw}}\left( \begin{array}{c}
		-1 +5\frac{i}{\sqrt{14}} \\
		-1 -4\frac{i}{\sqrt{14}}\\
		-1 -\frac{i}{\sqrt{14}}\\
	\end{array}\right), \quad
	v_{3,\bw}= \bar v_{2,\bw}
\end{eqnarray*}
\begin{eqnarray*} 
	\left( \begin{array}{c}
		\alpha_1\\
		\alpha_2\\
		\alpha_3\\
	\end{array}\right) = \frac1{\sqrt {14}}
	\left(
	\begin{array}{ccc}
		1 & 2 & -3\\
		-\frac{\sqrt{14}}{\sqrt 6}-\frac{5}{\sqrt 6}i  & -\frac{\sqrt{14}}{\sqrt 6}+\frac{4}{\sqrt 6}i &  -\frac{\sqrt{14}}{\sqrt 6}+\frac{1}{\sqrt 6}i  \\
		-\frac{\sqrt{14}}{\sqrt 6}+\frac{5}{\sqrt 6}i  & -\frac{\sqrt{14}}{\sqrt 6}-\frac{4}{\sqrt 6}i &  -\frac{\sqrt{14}}{\sqrt 6}-\frac{1}{\sqrt 6}i
	\end{array}
	\right)	\left(
	\begin{array}{ccc}
	1 \\
	1\\
	1	
	\end{array}
	\right)=\left(
	\begin{array}{ccc}
		0 \\
		-\sqrt{3/2}\\
		-\sqrt{3/2}
	\end{array}
	\right).
\end{eqnarray*}
Hence the solution is 
\[ 
	F(\bx,t)= -\sqrt {\frac{3}{2}} v_{2,\bw} e^{i({\bw}\cdot \bx - \mu_{2,\bw} t )} -\sqrt {\frac{3}{2}} v_{3,\bw}  e^{i({\bw}\cdot \bx - \mu_{3,\bw}t )}.
\]
Plugging the values of $ \mu_{2,\bw},  \mu_{3,\bw}$, we have
\begin{eqnarray*}
E(\bx,t) &=& \cos({\bw}\cdot \bx) \cos{\sqrt{14}\pi t}\left( \begin{array}{c}
	1\\
	1\\
	1\\
\end{array}\right) + \frac{1}{\sqrt{14}} \cos({{\bw}\cdot \bx}) \sin(\sqrt{14}\pi t) \left( \begin{array}{c}
5\\
-4\\
-1\\
\end{array}\right),\\
H(\bx,t) &=& \sin({\bw}\cdot \bx) \cos{\sqrt{14}\pi t}\left( \begin{array}{c}
	1\\
	1\\
	1\\
\end{array}\right) + \frac{1}{\sqrt{14}} \sin({{\bw}\cdot \bx}) \sin(\sqrt{14}\pi t) \left( \begin{array}{c}
	5\\
	-4\\
	-1\\
\end{array}\right).
\end{eqnarray*}
One can directly check that above $H$ and $E$ solve Equation (\ref{maxsys}) by applying 
\[
\CU \cos(\bw \cdot \bx) \left( \begin{array}{c}
	5\\
	-4\\
	-1\\
\end{array}\right) = 14\pi \sin(\bw \cdot \bx) \left( \begin{array}{c}
1\\
1\\
1\\
\end{array}\right), \quad \CU \sin(\bw \cdot \bx) \left( \begin{array}{c}
5\\
-4\\
-1\\
\end{array}\right) = -14\pi \cos(\bw \cdot \bx) \left( \begin{array}{c}
1\\
1\\
1\\
\end{array}\right),
\]
\[
\CU \cos(\bw \cdot \bx) \left( \begin{array}{c}
	1\\
	1\\
	1\\
\end{array}\right) = -\pi \sin(\bw \cdot \bx) \left( \begin{array}{c}
5\\
-4\\
-1\\
\end{array}\right), \quad \CU \sin(\bw \cdot \bx) \left( \begin{array}{c}
1\\
1\\
1\\
\end{array}\right) = \pi \cos(\bw \cdot \bx) \left( \begin{array}{c}
5\\
-4\\
-1\\
\end{array}\right).
\]
\appendix
\section{The proves of Proposition \ref{prop:property_maxwell_1}}
\subsection{The proof of Proposition \ref{prop:property_maxwell_1}}\label{app:property_maxwell_1}
\begin{proof}
 We follow the notations in Section \ref{sec:bj=0}. By definition, 
	\begin{eqnarray*}
		& & 4 |E_{\alpha_{\bw}, \bw}(\bx,t)|^2 = (F_{\alpha_{\bw}, \bw}(\bx,t) + \bar F_{\alpha_{\bw}, \bw}(\bx, t)) \cdot (F_{\alpha_{\bw}, \bw}(\bx,t) + \bar F_{\alpha_{\bw}, \bw}(\bx, t)) \\
		&=& (\alpha_{1,\bw}e^{i\bx\cdot \bw} + \bar\alpha_{1,\bw}  e^{-i\bx\cdot \bw})^2 
		+ 2(\alpha_{2,\bw}e^{i\bx\cdot \bw}  + \bar\alpha_{3,\bw} e^{-i\bx\cdot \bw}) (\alpha_{3,\bw}e^{i\bx\cdot \bw}  + \bar\alpha_{2,\bw} e^{-i\bx\cdot \bw}) \\ 
		&=& 2\sum_{1\le d\le 3}|\alpha_{d,\bw}|^2  + I + \bar I
	\end{eqnarray*}
	where
	\[
	I = (\alpha_{1,\bw}^2 + 2  \alpha_{2,\bw}\alpha_{3,\bw}) e^{2i\bx \cdot \bw}.
	\]
	Similarly, 
	\begin{eqnarray*}
		& & -4 |H_{\alpha_{\bw}, \bw}(\bx,t)|^2 = (F_{\alpha_{\bw}, \bw}(\bx,t) - \bar F_{\alpha_{\bw}, \bw}(\bx, t)) \cdot (F_{\alpha_{\bw}, \bw}(\bx,t)- \bar F_{\alpha_{\bw}, \bw}(\bx, t)) \\
		&=& (\alpha_{1,\bw}e^{i\bx\cdot \bw} - \bar\alpha_{1,\bw}  e^{-i\bx\cdot \bw})^2 
		+ 2(\alpha_{2,\bw}e^{i\bx\cdot \bw}  - \bar\alpha_{3,\bw} e^{-i\bx\cdot \bw}) (\alpha_{3,\bw}e^{i\bx\cdot \bw}  - \bar\alpha_{2,\bw} e^{-i\bx\cdot \bw}) \\ 
		&=& -2\sum_{1\le d\le 3}|\alpha_{d,\bw}|^2  + I + \bar I.
	\end{eqnarray*}
	and
	\begin{eqnarray*}
		&& 4i E_{\alpha_{\bw}, \bw}(\bx,t) \cdot H_{\alpha_{\bw}, \bw}(\bx,t) 
		= (F_{\alpha_{\bw}, \bw}(\bx,t) + \bar F_{\alpha_{\bw}, \bw}(\bx, t)) \cdot (F_{\alpha_{\bw}, \bw}(\bx,t)- \bar F_{\alpha_{\bw}, \bw}(\bx, t)) \\
		&=& \alpha^2_{1,\bw}e^{2i\bx\cdot \bw} -\bar\alpha^2_{1,\bw}  e^{-2i\bx\cdot \bw}
		+ (\alpha_{2,\bw}e^{i\bx\cdot \bw} + \bar\alpha_{3,\bw} e^{-i\bx\cdot \bw})(\alpha_{3,\bw}e^{i\bx\cdot \bw}  - \bar\alpha_{2,\bw} e^{-i\bx\cdot \bw}) \\ 
		&+& (\alpha_{2,\bw}e^{i\bx\cdot \bw}  - \bar\alpha_{3,\bw} e^{-i\bx\cdot \bw}) (\alpha_{3,\bw}e^{i\bx\cdot \bw}  + \bar\alpha_{2,\bw} e^{-i\bx\cdot \bw}) \\ 
		&=&  I -\bar I.
	\end{eqnarray*}
	So we obtain 
	\[
	|E_{\alpha_{\bw}, \bw}(\bx,t)|^2 + |H_{\alpha_{\bw}, \bw}(\bx,t)|^2 = \sum_{1\le d\le 3}|\alpha_{d,\bw}|^2
	\]
	is constant; $ |E_{\alpha_{\bw}, \bw}(\bx,t)|$, $|H_{\alpha_{\bw}, \bw}(\bx,t)|$ and $E_{\alpha_{\bw}, \bw}(\bx,t) \cdot H_{\alpha_{\bw}, \bw}(\bx,t)$ are stationary. 
\end{proof}

\end{document}